
\documentclass[letterpaper, 10 pt, journal, twoside]{ieeetran}
\pagestyle{empty} 

\usepackage[utf8]{inputenc}




\usepackage[normalem]{ulem}
\usepackage{cite}
\usepackage{amsmath,amssymb,amsfonts,mathtools}
\usepackage{graphicx}
\usepackage[export]{adjustbox}
\usepackage[dvipsnames]{xcolor}
\usepackage{hyperref}
\usepackage{cancel}
\usepackage{caption}
\usepackage{arydshln}
\usepackage{epsfig}
\usepackage{verbatim} 
\usepackage{tikz}

\usepackage{amsthm}

\newcommand{\continue}{\\ &\;\;}
\newcommand{\norm}[1]{\left|\left| #1 \right|\right|}

\newcommand{\reals}{\mathbb{R}}
\newcommand{\transpose}{^\textrm{T}}
\newcommand{\continuous}{\mathcal{C}}
\DeclareMathOperator*{\argmin}{arg\,min}

\DeclareMathOperator*{\wrap}{wrap}

\newtheorem{theorem}{}

\newtheorem{lemma}{}

\newtheorem{definition}{}

\newtheorem{corollary}{}

\newtheorem{remark}{}

\newtheorem{problem}{}



\def\BibTeX{{\rm B\kern-.05em{\sc i\kern-.025em b}\kern-.08em
    T\kern-.1667em\lower.7ex\hbox{E}\kern-.125emX}}

\title{Control Barrier Functions in Sampled-Data Systems}

\author{Joseph Breeden, Kunal Garg, and Dimitra Panagou
\thanks{This work was supported by the Air Force Office of Scientific Research under award number FA9550-17-1-0284 and the National Science Foundation under award number 1942907 and the Graduate Research Fellowship Program.}%
\thanks{The authors are with the Department of Aerospace Engineering, University of Michigan, Ann Arbor, MI, USA. Email: \texttt{\{jbreeden,kgarg,dpanagou\}@umich.edu}%
}
}

\definecolor{subsectioncolor}{cmyk}{0, 0, 0, 1}

\begin{document}

\maketitle
\thispagestyle{empty}

\begin{abstract}
This paper presents conditions for ensuring forward invariance of safe sets under sampled-data system dynamics with piecewise-constant controllers and fixed time-steps. First, we introduce two different metrics to compare the conservativeness of sufficient conditions on forward invariance under piecewise-constant controllers. Then, we propose three approaches for guaranteeing forward invariance, two motivated by continuous-time barrier functions, and one motivated by discrete-time barrier functions. All proposed conditions are control affine, and thus can be incorporated into quadratic programs for control synthesis. We show that the proposed conditions are less conservative than those in earlier studies, {\color{black} and show via simulation how this enables the use of barrier functions that are impossible to implement with the desired time-step using existing methods.}

\end{abstract}

\begin{IEEEkeywords}
Constrained control, sampled-data control
\end{IEEEkeywords}

\section{Introduction}

\IEEEPARstart{C}{ontrol} barrier functions (CBFs) and quadratic programs (QPs) have recently gained popularity for safety-critical control applications across disciplines, including vehicle control \cite{Vehicles_Journal,under_review_discrete_CBFs_vehicles}, bipedal robots \cite{Bipedal_Robots,Discrete_CBFs}, mechanical hands \cite{Cortez}, and multi-agent systems \cite{MultiRobots_Journal}. CBF conditions apply to {\color{black}both} continuous-time \cite{Vehicles_Journal,Bipedal_Robots,MultiRobots_Journal,Cortez} and discrete-time \cite{Discrete_CBFs,under_review_Discrete_MPC_CBFs,under_review_discrete_CBFs_vehicles} systems. In practice, physical systems evolve in continuous time under controllers that are implemented in discrete time, such as zero-order-hold (ZOH) controllers with fixed time-step. One can easily construct counter-examples showing that the control laws developed from the CBF condition in \cite{Vehicles_Journal,Bipedal_Robots,MultiRobots_Journal} are no longer safe when the controller is executed in discrete steps. On the other hand, a controller implemented under discrete-time CBFs may not satisfy the continuous safety condition between time steps \cite{CBF_Planning}. 

Recently, \cite{Cortez} proposed a method for ensuring satisfaction of the continuous-time CBF condition using a {\color{black}ZOH} control law by bounding the time derivative of the CBF between time steps. The {\color{black}method is extended} in \cite{James} to multi-agent systems in the presence of adversaries and uncertainty. The authors in \cite{Singletary} propose a similarly motivated approach, which also addresses uncertainty and input delay, using reachable set theory. 
{\color{black}In all cases,} certain safe states might be cast unreachable, or excessive control inputs might be used to avoid unsafe regions.

This paper studies conditions for forward invariance of safe sets under ZOH controllers. We begin by defining two types of margins, the \textit{controller margin} and the \textit{physical margin}, to compare the conservatism of the conditions developed. We then present extensions to the approaches in \cite{Cortez,James,Singletary} that reduce conservatism as measured by these margins, while similarly relying on proving that the continuous-time CBF condition is always satisfied. We then approach the problem starting from discrete-time CBF conditions such as in \cite{Discrete_CBFs,Blanchini}, and develop novel sufficient conditions on the forward invariance of a safe set under ZOH controllers.
Finally, we present simulations using the existing and new conditions on an obstacle-avoidance problem for a unicycle agent, and on a spacecraft attitude-control problem. The simulations demonstrate how the reduced conservatism of the proposed approaches enables {\color{black} both the achievement of tight tolerance mission objectives and the ZOH application of CBFs under time-steps that were not possible using the method in \cite{Cortez,James}.} 

\section{Preliminaries and Problem Formulation} \label{sec:preliminaries}

\textbf{Notations:} Let $\continuous^r$ be the set of $r$-times continuously differentiable functions, and let $\mathcal{C}^r_{loc}$ be the subset of $\mathcal{C}^r$ with locally Lipschitz $r$th derivatives. A function $\alpha:\reals\rightarrow\reals$ is extended class-$\mathcal{K}$, denoted $\alpha \in \mathcal{K}$, if it is continuous, strictly increasing, and $\alpha(0) = 0$. $B_r(x)$ denotes the closed ball centered at $x$ of radius $r$. $||\cdot||$ refers to the 2-norm when $(\cdot)$ is a vector, and the matrix-induced 2-norm when $(\cdot)$ is a matrix. $||\cdot||_\infty$ refers to the infinity-norm. {\color{black} Let $\cdot$ refer to the inner product, and $\times$ refer to the vector product}. $\wrap_\pi(\lambda)$ wraps $\lambda$ to $[-\pi, \pi]$. Let $\nabla [h]$ denote the gradient of $h$. Let $L_f h(x)$ denote the Lie derivative of $h$ along $f$ at $x$, $L_f h(x) = \nabla[h(x)]f(x)$. For a given dynamical system, let $\mathcal{R}(x(0),T)$ denote the set of states reachable from some $x(0)\in \reals^n$ in times $0\leq t< T$.

\vspace{2pt}
\noindent \textbf{Problem formulation:} We consider the system
\begin{equation}
	\dot{x} = f(x) + g(x) u \,, \label{eq:model}
\end{equation}
with state $x \in \reals^n$, control input $u\in U \subset \reals^m$ where $U$ is compact, and locally Lipschitz continuous functions $f: \reals^n \rightarrow \reals^n$ and $g: \reals^n \rightarrow \reals^{n \times m}$. Define $u_\textrm{max} \triangleq \max_{u\in U} ||u||$. Let $h: \reals^n \rightarrow \reals$ where $h \in \continuous^1_{loc}$, and define a safe set $S$ as 
\begin{equation}
	S \triangleq \{ x \in \reals^n \mid h(x) \leq 0\} \,. \label{eq:safe_set}
\end{equation}

For a continuous control law $u(x)$, the problem of rendering $S$ forward invariant is solved in \cite{Vehicles_Journal} using Zeroing CBFs, in the sequel called simply CBFs. This leads to a condition of the following form, adapted to the notation of this paper.

\begin{lemma}[{\hspace{-0.2pt}\cite[Cor. 2]{Vehicles_Journal}}] \label{prior:cbf_condition}
    Let $\alpha\in \mathcal{K}$. Let $h:\reals^n\rightarrow\reals$, $h\in \continuous^1$ define a set $S = \{x\in\reals^n : h(x)\leq 0\}$. Then for the system \eqref{eq:model}, any Lipschitz continuous control input $u(x)$ satisfying
    \begin{equation}
        L_f h(x) + L_g h(x) u(x) \leq \alpha(-h(x)), {\color{black} \; \forall x \in S,}
        \label{eq:cbf_condition}
    \end{equation}
renders $S$ forward invariant along the closed-loop trajectories of \eqref{eq:model}.
\end{lemma}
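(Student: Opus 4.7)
The plan is to prove forward invariance of $S$ by a comparison-lemma argument applied to $h$ along closed-loop trajectories, which is the standard route for Zeroing CBFs. First I would observe that since $f,g$ are locally Lipschitz and $u$ is assumed Lipschitz continuous, the closed-loop vector field $f(x)+g(x)u(x)$ is locally Lipschitz, so for any $x(0)\in S$ there exists a unique maximal solution $x(\cdot)$ of \eqref{eq:model}. Along this solution, $h\in\continuous^1$ implies that $\eta(t)\triangleq h(x(t))$ is differentiable with $\dot\eta(t)=L_f h(x(t))+L_g h(x(t))u(x(t))$.

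Next I would use the hypothesis \eqref{eq:cbf_condition} to conclude that, whenever $x(t)\in S$, one has $\dot\eta(t)\leq \alpha(-\eta(t))$. To turn this differential inequality into a bound on $\eta$, I would compare against the scalar ODE $\dot y=\alpha(-y)$ with $y(0)=\eta(0)\leq 0$. Since $\alpha\in\mathcal{K}$ satisfies $\alpha(0)=0$ and $\alpha(s)>0$ for $s>0$, the set $\{y\leq 0\}$ is forward invariant for this scalar ODE: on $\{y<0\}$ the flow moves toward $0$, and $y=0$ is an equilibrium, so no trajectory starting at $y(0)\leq 0$ can ever reach a strictly positive value. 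The comparison lemma then yields $\eta(t)\leq y(t)\leq 0$ for all $t\geq 0$ for which the CBF inequality is in force, i.e.\ for all $t$ such that $x(t)\in S$.

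Finally, I would close the argument by ruling out an escape from $S$. Suppose for contradiction that $T\triangleq\inf\{t\geq 0:x(t)\notin S\}$ is finite. By continuity of $x(\cdot)$ and of $h$, $x(t)\in S$ for all $t\in[0,T]$, and the CBF inequality therefore holds on $[0,T]$. The comparison step above then gives $\eta(T)\leq 0$, so $x(T)\in S$; by continuity, $x(t)\in S$ on some interval $[0,T+\varepsilon)$, contradicting the definition of $T$. Hence $x(t)\in S$ for all $t$ in the maximal interval of existence, which is the desired forward invariance.

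The main obstacle is being precise about where the CBF condition is assumed to hold: \eqref{eq:cbf_condition} is only imposed for $x\in S$, so the comparison-lemma step has to be restricted to an interval on which $x(t)\in S$, and forward invariance must be bootstrapped from this local bound via the continuity/contradiction argument in the last step rather than applied globally in one shot.
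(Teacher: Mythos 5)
First, a point of reference: the paper does not prove this lemma at all --- it is imported verbatim from \cite[Cor.~2]{Vehicles_Journal} --- so there is no in-paper argument to compare against, and your comparison-lemma route is indeed the standard one used in that literature. However, as written your proof has a genuine gap at the final step. From $\eta(T)\le 0$ you conclude ``$x(T)\in S$; by continuity, $x(t)\in S$ on some interval $[0,T+\varepsilon)$.'' Continuity only propagates the \emph{strict} inequality $h(x(T))<0$ forward in time; at a genuine first exit time one necessarily has $h(x(T))=0$ (there exist $t_n\downarrow T$ with $\eta(t_n)>0$, forcing $\eta(T)\ge 0$), and then nothing prevents $\eta$ from becoming positive immediately after $T$, because the hypothesis \eqref{eq:cbf_condition} is imposed only for $x\in S$ and hence gives you no control over $\dot\eta$ on $[T,T+\varepsilon)$. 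This is exactly the point where the entire difficulty of a forward-invariance proof is concentrated, and the argument as stated is circular there.

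The gap cannot be closed under the hypotheses as literally written, which is a second, related problem: the comparison lemma requires the scalar equation $\dot y=\alpha(-y)$ to have unique solutions, i.e.\ $\alpha$ locally Lipschitz, whereas the lemma only assumes $\alpha\in\mathcal K$. Without that, the statement can actually fail: take $n=1$, $\dot x=-1$ (no control), $h(x)=-x^3$, so $S=\{x\ge 0\}$, and $\alpha(s)=3\,\mathrm{sign}(s)|s|^{2/3}\in\mathcal K$. Then $L_fh(x)=3x^2=\alpha(-h(x))$ for every $x\in S$, so \eqref{eq:cbf_condition} holds with equality, yet the trajectory from $x(0)=0$ leaves $S$ instantly. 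The cited reference avoids both issues by assuming $\alpha$ locally Lipschitz and by requiring the CBF inequality on an open set $\mathcal D\supseteq S$ (equivalently, by exploiting that the extended class-$\mathcal K$ function makes $\dot h<0$ just outside $S$): the comparison lemma then applies on the entire interval during which $x(t)\in\mathcal D$, including any attempted excursion past $\partial S$, and yields $h(x(t))\le y(t)\le 0$ directly, with no exit-time bootstrap needed. If you insist on imposing the inequality only on $S$, you must instead invoke Nagumo's theorem together with a regularity assumption such as $\nabla h(x)\neq 0$ on $\partial S$ (which is what rules out the counterexample above); your proof supplies neither mechanism.
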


To apply \ref{prior:cbf_condition}, we must ensure \eqref{eq:cbf_condition} is satisfied along $x(t)$ for all $t \geq 0$.
However, suppose instead that the state $x$ is only measured discretely (and thus $u(x)$ is updated discretely too) at times $t_k = kT, k=0,1,2,\cdots$ for a fixed time-step $T\in\reals_{>0}$. 
Consider a ZOH control law
\begin{equation}
    u(t) = u_k,\; \forall t \in [t_k, t_{k+1}) \,, \label{eq:piecewise_control}
\end{equation}
where $u_k = u_k(x_k)\in U$ and $x_k = x(t_k)$, $\forall k\in \mathbb N$\footnote{Under $u$ as in \eqref{eq:piecewise_control} for a compact set $U$, uniqueness of the maximal closed-loop solution $x(t)$ (and hence $x_k$) is guaranteed by \cite[Thm. 54]{uniqueness}.}. 
Satisfaction of \eqref{eq:cbf_condition} only discretely is not sufficient for safety.
Thus, we seek a condition similar to \eqref{eq:cbf_condition} under which safety can be guaranteed when the control input is updated only at discrete times. To this end, we consider the following problem.

\begin{problem} \label{problem:design}
    Design a function $\phi:\reals_{>0}\times\reals^n \rightarrow \reals$ such that any bounded, piecewise-constant control input $u\in U$ of the form \eqref{eq:piecewise_control} satisfying
    \begin{equation}
        L_f h(x_k) + L_g h(x_k) u_k \leq \phi(T, x_k), \label{eq:zoh_cbf_condition}
    \end{equation}
    at the sampled states $x_k=x(kT), k\in\mathbb N$ renders $S$ forward invariant along the closed-loop trajectories of \eqref{eq:model}.
\end{problem}
\noindent We call \eqref{eq:zoh_cbf_condition} the ZOH-CBF condition. 
The following result, adapted from \cite{Cortez}, provides one form of the function $\phi$ that solves \ref{problem:design} (see also \cite{James}).

\begin{lemma}[{\hspace{-0.2pt}\cite[Thm. 2]{Cortez}}]
	\label{prior:zoh}
	Let the set $S$ in \eqref{eq:safe_set} be compact and $\alpha\in\mathcal{K}$ be locally Lipschitz continuous. Let $l_{L_fh}, l_{L_gh}, l_{\alpha(h)}$ be the Lipschitz constants of $L_fh, L_gh, \alpha(-h)$, respectively. Then the function $\phi_0^g:\reals_{>0}\times\reals^n$, defined as
	\begin{equation}
		\phi_0^g(T,x) \triangleq \alpha(-h(x)) - \frac{l_1 \Delta}{l_2}\left( e^{l_2 T} - 1\right) \,, \label{eq:phi0}
	\end{equation}
	solves \ref{problem:design}, 	where $l_1 = l_{L_f h} + l_{L_g h}u_\textrm{max} + l_{\alpha(h)}, l_2 = l_{L_f h} + l_{L_g h}u_\textrm{max}$, and $\Delta = \sup_{x\in S,u\in U}||f(x) + g(x)u||$.
\end{lemma}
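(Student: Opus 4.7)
The plan is to proceed by induction over the sampling intervals, showing that $x_k \in S$ implies $x(t) \in S$ for all $t \in [t_k, t_{k+1}]$; the base case $x_0 \in S$ is implicit in the problem setup. On each interval $[t_k, t_{k+1})$ the ZOH control $u \equiv u_k$ is constant, so if the continuous-time CBF inequality
\begin{equation*}
L_f h(x(t)) + L_g h(x(t)) u_k \leq \alpha(-h(x(t)))
\end{equation*}
can be verified pointwise along the trajectory, then the comparison-lemma argument underlying \ref{prior:cbf_condition} (comparing $h(x(\cdot))$ against the scalar flow $\dot y = \alpha(-y)$ with $y(t_k) = h(x_k) \leq 0$) will yield $h(x(t)) \leq 0$ on $[t_k, t_{k+1}]$, closing the induction.

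To verify this pointwise inequality, I would define
\begin{equation*}
\Psi(t) \triangleq L_f h(x(t)) + L_g h(x(t)) u_k - \alpha(-h(x(t)))
\end{equation*}
and note that the hypothesis \eqref{eq:zoh_cbf_condition} with $\phi = \phi_0^g$ reads exactly $\Psi(t_k) \leq -\tfrac{l_1 \Delta}{l_2}(e^{l_2 T} - 1)$; it then suffices to upper-bound $\Psi(t) - \Psi(t_k)$ on the interval. Lipschitz continuity of $L_f h$, $L_g h$, and $\alpha \circ (-h)$, together with $\|u_k\| \leq u_\textrm{max}$, gives $\Psi(t) - \Psi(t_k) \leq (l_{L_f h} + l_{L_g h} u_\textrm{max} + l_{\alpha(h)})\|x(t) - x_k\| = l_1 \|x(t) - x_k\|$. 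While $x(\cdot) \in S$, the bound $\|\dot x\| \leq \Delta$ combined with a Gronwall-type estimate on the closed-loop vector field $f(x) + g(x) u_k$ will produce $\|x(t) - x_k\| \leq \tfrac{\Delta}{l_2}(e^{l_2(t-t_k)} - 1)$; substituting back yields $\Psi(t) \leq 0$ on $[t_k, t_{k+1}]$, as needed.

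The hard part will be the apparent circularity that $\|\dot x\| \leq \Delta$ is only guaranteed on $S$, while invariance of $S$ on the interval is precisely what I am proving. I would resolve this via a continuity/contradiction argument: set $\tau \triangleq \inf\{t \in [t_k, t_{k+1}] : h(x(t)) > 0\}$ and suppose $\tau < t_{k+1}$. By the infimum and continuity, $h(x(\tau)) = 0$ and $x(\cdot) \in S$ on $[t_k, \tau]$, so the preceding estimates apply and give $\Psi(\tau) < 0$ strictly (since $\tau - t_k < T$ makes the Gronwall bound strict). This forces $\dot h(x(\tau)) < 0$, contradicting the fact that $\tau$ is the infimum of times at which $h(x) > 0$. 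A secondary technical point is producing the exact exponential form $\tfrac{l_1 \Delta}{l_2}(e^{l_2 T} - 1)$ in the Gronwall step, rather than the cruder linear bound $l_1 \Delta T$ (which would also suffice, but with a slightly more conservative condition); this requires a careful comparison bound on $\|F(x(t))\|$ along the trajectory of $F = f + gu_k$, as in the original derivation in \cite{Cortez}.
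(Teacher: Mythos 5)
Your proposal is correct and follows essentially the same route as the source: the paper itself does not reprove this lemma (it is quoted from \cite{Cortez}), but it explicitly describes the key mechanism in Section~\ref{sec:existing} --- namely that $\tfrac{\Delta}{l_2}(e^{l_2 T}-1)$ is a Gronwall-type bound on $\|x(t)-x_k\|$ arising from treating $x_k$ as evolving under the closed-loop dynamics, which is then multiplied by the combined Lipschitz constant $l_1$ of $L_fh$, $L_gh\,u$, and $\alpha(-h)$ --- and your $\Psi$-decomposition plus boundary-contradiction argument is exactly this, matching the structure the paper reuses in its own proof of \ref{thm:nu1} with the sharper linear bound $\tau\Delta$ in place of the exponential one.
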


Note that \eqref{eq:cbf_condition} and \eqref{eq:zoh_cbf_condition} are sufficient, not necessary, conditions for forward invariance \cite[Rem. 12]{CBFs_FirstUse}. 
In practice, the form of the function $\phi_0^g$ in \eqref{eq:phi0} is conservative in the sense that many safe trajectories may fail to satisfy \eqref{eq:zoh_cbf_condition} for $\phi=\phi_0^g$, as illustrated in Section~\ref{sec:results}. The work in this paper is devoted to developing alternative solutions to \ref{problem:design} that are less conservative compared to \eqref{eq:phi0}. We first introduce two metrics to quantify the conservatism of solutions to \ref{problem:design}.

\vspace{2pt}
\noindent \textbf{Comparison metrics:} We consider functions $\phi$ of the form:
\begin{equation}
    \phi
    (T,x)= \alpha(-h(x)) - \nu(T,x) \,, \label{eq:phi_form}
\end{equation}
where $\alpha$ is a class-$\mathcal{K}$ function that vanishes as $h(x)\rightarrow 0$, and $\nu:\reals_{>0}\times\reals^n\rightarrow\reals$ is a function of the discretization time-step $T$ and the state $x$ 
that does not explicitly depend on $h$. This motivates our first metric of comparison, defined as follows.

\begin{definition}[\textbf{Controller margin}] \label{def:control_margin}
    The function $\nu$ in \eqref{eq:phi_form} is called the \textnormal{controller margin}.
\end{definition}

Note that $\nu$ is the difference between the right-hand sides of conditions \eqref{eq:cbf_condition} and \eqref{eq:zoh_cbf_condition}, and is a bound on the discretization error that could occur between time steps. At a given state $x \in S$, a larger controller margin will necessitate a larger control input %
 to satisfy \eqref{eq:zoh_cbf_condition}. {\color{black}A sufficiently large controller margin might also necessitate inadmissible control inputs, and thus make a CBF no longer applicable to a system.} Thus, it is desired to design functions $\phi$ whose controller margins are small. For a given $T$, we call a solution $\phi_a$ less conservative than $\phi_b$ if the controller margins of $\phi_a$ and $\phi_b$ satisfy $\nu_a(T,x) \leq \nu_b(T,x), \forall x \in S$.

The controller margin is called \textit{local} (denoted as $\nu^l(T,x)$) if $\nu$ varies with $x$, and \textit{global} (denoted as $\nu^g(T)$) if $\nu$ is independent of $x$. The superscripts $l$ and $g$, respectively, denote the corresponding cases, and $\nu$ is denoted with the same sub/superscripts as the corresponding $\phi$ function. For instance, \begin{equation}
\nu_0^g(T)=\frac{l_1\Delta}{l_2}(e^{l_2T}-1) \label{eq:nu0}
\end{equation} is the controller margin of $\phi_0^g$ defined in \eqref{eq:phi0}, and is a global margin because it is independent of $x$.

Note that condition \eqref{eq:cbf_condition} imposes that the time derivative of $h$ vanishes as $h$ approaches the boundary of the safe set. In contrast, the ZOH-CBF condition \eqref{eq:zoh_cbf_condition} causes the time derivative of $h$ to vanish at a manifold in the interior of {\color{black}the} safe set. 
Inspired from this, we define a second metric of comparison, which captures the maximum distance between this manifold and the boundary of the safe set.

\begin{definition}[\textbf{Physical margin}] \label{def:physical_margin}
    For a solution $\phi$ of \ref{problem:design} with the form \eqref{eq:phi_form}, the \textnormal{physical margin} is the function $\delta:\reals_{>0}\rightarrow\reals$ defined as 
    \begin{equation}
        \delta(T) \triangleq \sup_{\color{black}\{x\in S \;\mid\; \phi(T,x) = 0\}} -h(x) \,.
    \end{equation}
\end{definition}
\noindent Intuitively, $\delta$ quantifies the effective shrinkage of the safe set due to the error introduced by discrete sampling. 
The condition \eqref{eq:zoh_cbf_condition} may exclude closed-loop trajectories from entering the set $S_\delta = \{x\; |\; -\delta \leq h(x)\leq 0\}$, while the condition \eqref{eq:cbf_condition} does not. 
A smaller physical margin $\delta$ implies a smaller set $S_\delta$ where system trajectories may not be allowed to enter.

\begin{remark} \label{rem:lower_bound}
	The physical margin $\delta$ depends on the choice of $\alpha\in\mathcal{K}$, but is always lower bounded. To capture this, define
    \begin{equation}
        \delta^{\inf}(T) \triangleq \inf_{\alpha \in \mathcal{A}} \delta(T) \,,
    \end{equation}
    where $\mathcal{A} \subseteq \mathcal{K}$ is the set of considered $\alpha$ (e.g. Lischitz continuous $\alpha$ in \ref{prior:zoh}).
    Note that $\delta^{\inf}$ may be unachievable. For instance, the $\alpha$ which yields the \textnormal{physical margin-infimum} for $\phi_0^g$ 
    is a linear function with an unbounded slope.
\end{remark}

The goal of Section~\ref{sec:methodologies} is to develop solutions to \ref{problem:design} which have lower controller and/or physical margins than $\phi_0^g$. 

\section{New Methods} \label{sec:methodologies}

This section presents three solutions to \ref{problem:design}, in both local and global forms, which follow from either continuous-time CBF conditions such as \eqref{eq:cbf_condition} (Section~\ref{sec:existing}), or discrete-time CBF conditions \cite{Discrete_CBFs,Blanchini} (Section~\ref{sec:second_order}).

\subsubsection{Extensions to Existing Literature} \label{sec:existing}

First, we note that in the proof of \ref{prior:zoh} in \cite{Cortez}, the term $\frac{\Delta}{l_2}(e^{l_2 T} - 1)$ serves as an upper bound on $||x(t) - x_k||, t \in [kT, (k+1)T)$. {\color{black} The bound is exponential, because $x_k$ is treated as a solution to a dynamical system in \cite{Cortez}. Noting that $x_k$ is a constant, the} following lemma presents an alternative upper bound.

\begin{lemma} \label{lemma:Delta_bound}
Let $\Delta = \sup_{x\in\mathcal{D},u\in U}||f(x)+g(x)u||$ where $\mathcal{D}\subseteq\reals^n$. Then for any $x_k=x(kT)\in \mathcal D$, the closed-loop trajectories of \eqref{eq:model} satisfy $||x(kT+\tau) - x_k|| \leq \tau \Delta$ for all $\tau \in \reals_{\geq0}$ such that {\color{black}$x(kT+\tau)\in\mathcal{D}$}.	
\end{lemma}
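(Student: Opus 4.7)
The plan is to exploit the fundamental theorem of calculus together with the piecewise-constant structure of the control input. Fix an index $k$ and a $\tau \geq 0$ such that the closed-loop trajectory $x(\cdot)$ remains in $\mathcal{D}$ throughout $[kT, kT+\tau]$ (this is the natural reading of the hypothesis together with continuity of $x$). By \eqref{eq:piecewise_control}, the control is held constant at $u_k \in U$ on $[kT, (k+1)T)$, so during that window the closed-loop dynamics reduce to $\dot{x}(kT+s) = f(x(kT+s)) + g(x(kT+s))\,u_k$ with $u_k \in U$ for almost every $s \in [0,\tau]$.

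Next I would integrate and apply the triangle inequality for Bochner/vector-valued integrals:
\begin{equation*}
    x(kT+\tau) - x_k = \int_0^\tau \bigl[f(x(kT+s)) + g(x(kT+s))\,u_k\bigr]\,ds,
\end{equation*}
so that
\begin{equation*}
    \|x(kT+\tau) - x_k\| \leq \int_0^\tau \bigl\|f(x(kT+s)) + g(x(kT+s))\,u_k\bigr\|\,ds.
\end{equation*}
Because $x(kT+s) \in \mathcal{D}$ and $u_k \in U$ for every $s$ in the integration range, the definition $\Delta = \sup_{x \in \mathcal{D},\, u \in U}\|f(x) + g(x)u\|$ bounds the integrand pointwise by $\Delta$, and integrating the constant bound over $[0,\tau]$ yields $\tau\Delta$, giving the claim.

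The entire argument is little more than a one-line integration, so there is no serious obstacle. The one point worth flagging is the interpretation of the hypothesis: the bound requires $x(kT+s) \in \mathcal{D}$ for \emph{every} $s \in [0,\tau]$, not merely at the endpoint. In the uses of this lemma later in the paper, $\mathcal{D}$ will be chosen (e.g., as $S$ or a related invariant region) so that continuity of the trajectory and the forward-invariance argument under construction ensure membership on the whole sub-arc; for trajectories that exit and re-enter $\mathcal{D}$ one would restrict $\tau$ to the first exit time. Apart from this housekeeping, no compactness or Lipschitz properties of $f$, $g$ are needed beyond what is already assumed in Section~\ref{sec:preliminaries}, and the improvement over the exponential bound $\tfrac{\Delta}{l_2}(e^{l_2 T}-1)$ used in \ref{prior:zoh} comes purely from treating $x_k$ as a fixed point rather than as a solution of an auxiliary ODE.
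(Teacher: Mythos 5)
Your proof is correct, and it is exactly the argument the authors intend: the paper states \ref{lemma:Delta_bound} without proof, but the surrounding text ("Noting that $x_k$ is a constant...") makes clear the intended derivation is precisely your one-line integration of \eqref{eq:model} with the integrand bounded pointwise by $\Delta$. Your observation that the hypothesis should really require $x(kT+s)\in\mathcal{D}$ for all $s\in[0,\tau]$ (not just at the endpoint) is a fair reading of a slight imprecision in the statement, and is handled correctly; the only cosmetic remark is that for $\tau>T$ the control is piecewise constant rather than equal to $u_k$ throughout, which changes nothing since every value lies in $U$.
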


Second, we note that $\nu_0^g$ is a global margin.
The ZOH-CBF condition \eqref{eq:zoh_cbf_condition} with $\phi$ of the form \eqref{eq:phi_form} can be made less conservative by using local margins instead of global margins.
To this end, let $\mathcal{R}(x_k,T)$ denote the set of states reachable from some $x_k\in S$ in times $t\in[kT, (k+1)T)$. We are now ready to present the first main result of this paper.

\begin{theorem}\label{thm:nu1}
	Consider the set $S$ defined in \eqref{eq:safe_set} and let $\alpha\in\mathcal{K}$ be locally Lipschitz. Let $l_{L_f h}(x), l_{L_gh}(x), l_{\alpha(h)}(x)$ be the Lipschitz constants of $L_fh, L_gh,\alpha(-h)$ over the set $\mathcal{R}(x, T)$, respectively. Then the function $\phi_1^l:\reals_{>0}\times\reals^n$, defined as
	\begin{equation}
		\phi_1^l(T,x) \triangleq \alpha(-h(x)) - \underbrace{l_1(x) T \Delta(x)}_{\nu^l_1(T,x)} \,, \label{eq:phi1_local}
	\end{equation}
	solves \ref{problem:design},
	where $l_1(x) = l_{L_fh}(x)+l_{L_gh}(x)u_\textrm{max} + l_{\alpha(h)}(x)$, and $\Delta(x) = \sup_{z\in\mathcal{R}(x,T),u\in U}||f(z) + g(z) u||$. 
\end{theorem}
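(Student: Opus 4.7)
The plan is to follow the template of \ref{prior:zoh} with two substitutions: replace the exponential state-drift bound used there by the linear bound of \ref{lemma:Delta_bound}, and replace its global Lipschitz constants by constants valid only on the forward reachable tube $\mathcal{R}(x_k,T)$. Concretely, I would show that whenever \eqref{eq:zoh_cbf_condition} holds at every sample $x_k$ with $\phi = \phi_1^l$, the continuous-time CBF condition \eqref{eq:cbf_condition} is satisfied along the closed-loop trajectory for all $t \in [t_k,t_{k+1})$; forward invariance of $S$ then follows by applying \ref{prior:cbf_condition} on each sampling interval and gluing the pieces via continuity of $h\circ x$.

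Fix an interval $[t_k,t_{k+1})$. Since $x(t) \in \mathcal{R}(x_k,T)$ by the definition of the reachable set, \ref{lemma:Delta_bound} applied with $\mathcal{D} = \mathcal{R}(x_k,T)$ gives $\|x(t) - x_k\| \leq (t-t_k)\Delta(x_k) \leq T\Delta(x_k)$. The local Lipschitz bounds of $L_f h$, $L_g h$, and $\alpha(-h)$ over $\mathcal{R}(x_k,T)$, combined with $\|u_k\| \leq u_\textrm{max}$, then yield
\begin{align*}
& L_f h(x(t)) + L_g h(x(t)) u_k - \alpha(-h(x(t))) \\
&\quad \leq L_f h(x_k) + L_g h(x_k) u_k - \alpha(-h(x_k)) + l_1(x_k)\,\|x(t) - x_k\|.
\end{align*}
By hypothesis \eqref{eq:zoh_cbf_condition}, the first three terms on the right sum to at most $\phi_1^l(T,x_k) - \alpha(-h(x_k)) = -l_1(x_k) T \Delta(x_k)$, and the final term is at most $l_1(x_k) T \Delta(x_k)$, so the total is $\leq 0$. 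Hence \eqref{eq:cbf_condition} holds at $x(t)$ with the constant input $u\equiv u_k$, which is trivially Lipschitz in $x$ on this interval, and \ref{prior:cbf_condition} applies.

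Iterating this argument over $k$ and using continuity of $h\circ x$ at the sampling times gives $h(x(t)) \leq 0$ for all $t \geq 0$, proving that $\phi_1^l$ solves \ref{problem:design}. The main bookkeeping care point I foresee is the transition across $t = t_{k+1}$, since \ref{prior:cbf_condition} naturally yields $h(x(t)) \leq 0$ only on the half-open interval $[t_k,t_{k+1})$; continuity of $h\circ x$ extends this to $h(x(t_{k+1})) \leq 0$, which then serves as the initial condition for the next interval. The substantive novelty over \ref{prior:zoh} is purely in which Lipschitz constants one invokes and which drift bound one uses; no new dynamical argument is required.
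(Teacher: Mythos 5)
Your proof is correct and follows essentially the same route as the paper's: bound $\|x(t)-x_k\|$ by $T\Delta(x_k)$ via \ref{lemma:Delta_bound}, use the local Lipschitz constants over $\mathcal{R}(x_k,T)$ to transfer \eqref{eq:zoh_cbf_condition} at $x_k$ into $\dot h(x(t))\leq\alpha(-h(x(t)))$ on each sampling interval, and conclude forward invariance by a comparison argument. The only cosmetic difference is at the end: the paper concludes directly from the differential inequality holding for all $t\geq 0$ (citing \cite{Blanchini}) rather than re-invoking \ref{prior:cbf_condition} interval-by-interval, which is the slightly cleaner phrasing since \ref{prior:cbf_condition} formally requires \eqref{eq:cbf_condition} to hold for all $x\in S$, whereas you have verified it only along the closed-loop trajectory.
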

\begin{proof} 
For all $t\in [kT, (k+1)T)$, $k\in \mathbb N$, it holds that
\begin{align*}
	L_f h& (x) + L_g h(x) u_k =  L_f h(x_k) + L_g h(x_k) u_k -\alpha(-h(x_k)) \continue + [L_f h(x) - L_f h(x_k) + (L_g h(x) - L_g h(x_k))u_k \continue - (\alpha(-h(x)) - \alpha(-h(x_k)))] +\alpha(-h(x))\\
	& \leq  L_f h(x_k) + L_g h(x_k) u_k -\alpha(-h(x_k))+ (l_{L_fh}(x_k)+\continue + l_{L_gh}(x_k)u_\textrm{max} + l_{\alpha(h)}(x_k))||x - x_k|| + \alpha(-h(x))\\
	& \overset{\eqref{eq:phi1_local}}{\leq} L_f h(x_k) + L_g h(x_k) u_k - \phi_1^l(T,x_k) + \alpha(-h(x))\\
	& \overset{\eqref{eq:zoh_cbf_condition}}{\leq} \alpha(-h(x)),
\end{align*}
where the argument $t$ in $x(t)$ is omitted for brevity.
Thus, under \eqref{eq:zoh_cbf_condition} with $\phi = \phi_1^l$, we have that $\dot h(x(t))= L_f h(x(t)) + L_g h(x(t))u_k \leq \alpha(-h(x(t))$ for all $t\in [kT, (k+1)T)$. Since this holds for all $ k \in \mathbb{N}$, 
it follows that $\dot{h}(x(t)) \leq \alpha (-h(x(t)))$ for all $t\geq 0$. With $h(x(0))\leq 0$ and uniqueness of the closed-loop trajectories, it follows that the set $S$ is forward invariant \cite{Blanchini}, and therefore, the function $\phi_1^l$ solves \ref{problem:design}.
\end{proof}

\ref{thm:nu1} requires knowledge of the local Lipschitz constants. If these are unavailable (e.g. due to computation constraints), we can still improve upon \ref{prior:zoh} with the global margin function introduced in the following result.

\begin{corollary} \label{thm:nu1g}
    Under the assumptions of \ref{prior:zoh}, and with $l_1,\Delta$ as in \ref{prior:zoh}, the function $\phi_1^g:\reals_{>0}\times\reals^n$, defined as
    \begin{equation}
        \phi_1^g(T,x) \triangleq \alpha(-h(x)) - \underbrace{l_1 T\Delta}_{\nu_1^g(T)}  \,,  \label{eq:phi1global}
    \end{equation}
    solves \ref{problem:design}. Furthermore, for the same $\alpha$, it holds that $\nu_1^l(T,x) \leq \nu_1^g(T) < \nu_0^g(T), \forall x\in S,\forall T\in\reals_{>0}$.
\end{corollary}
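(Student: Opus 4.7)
The plan is to piggy-back on \ref{thm:nu1}: first I would show that the global margin $\nu_1^g$ dominates the local margin $\nu_1^l$ pointwise on $S$, and then invoke \ref{thm:nu1} to obtain forward invariance. Under the assumptions of \ref{prior:zoh}, the constants $l_{L_fh}, l_{L_gh}, l_{\alpha(h)}$ and $\Delta$ are global Lipschitz constants and a global velocity bound, respectively, and by the local Lipschitz hypothesis they extend to a compact neighborhood of $S$ containing $\mathcal{R}(x,T)$ for every $x\in S$. Hence each of the local Lipschitz constants over $\mathcal{R}(x,T)$ appearing in \ref{thm:nu1} is bounded by its global counterpart from \ref{prior:zoh}, and likewise $\Delta(x)\leq \Delta$. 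Multiplying these bounds gives $\nu_1^l(T,x) = l_1(x) T \Delta(x) \leq l_1 T \Delta = \nu_1^g(T)$ for all $x\in S$, which is the first inequality of the ``furthermore'' part. Consequently $\phi_1^g(T,x)\leq \phi_1^l(T,x)$ on $S$, so any $u_k$ satisfying \eqref{eq:zoh_cbf_condition} with $\phi=\phi_1^g$ also satisfies it with $\phi=\phi_1^l$, and \ref{thm:nu1} then delivers forward invariance of $S$ under the closed-loop dynamics, establishing that $\phi_1^g$ solves \ref{problem:design}.

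For the strict inequality $\nu_1^g(T) < \nu_0^g(T)$, I would invoke the elementary fact $e^y > 1 + y$ for every $y > 0$. Setting $y = l_2 T$ (which is strictly positive in the non-degenerate case), this yields $e^{l_2 T} - 1 > l_2 T$, i.e.\ $\tfrac{1}{l_2}(e^{l_2 T} - 1) > T$. Multiplying through by the positive constant $l_1 \Delta$ gives $\nu_0^g(T) = \tfrac{l_1 \Delta}{l_2}(e^{l_2 T} - 1) > l_1 T \Delta = \nu_1^g(T)$, as claimed. Degenerate cases ($l_2 = 0$ or $\Delta = 0$) are handled by a limit argument or by observing that both margins vanish identically.

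The only real obstacle is the local-to-global comparison in the first paragraph: a priori $\mathcal{R}(x,T)$ need not lie in $S$, so one must justify that the global constants from \ref{prior:zoh} dominate the local constants of \ref{thm:nu1} on the reachable set. This is a minor technical point, resolved by noting that local Lipschitz continuity of $L_f h$, $L_g h$, and $\alpha(-h)$ together with the finiteness of $\Delta$ confines $\mathcal{R}(x,T)$ to a fixed compact neighborhood of $S$ on which uniform Lipschitz constants (at least as large as those of \ref{prior:zoh}) are available. Everything else is arithmetic.
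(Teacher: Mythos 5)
Your proposal is correct and follows essentially the same route as the paper: show $\nu_1^l(T,x)\leq\nu_1^g(T)$ so that $\phi_1^g\leq\phi_1^l$ and invoke \ref{thm:nu1}, then obtain the strict inequality from $e^{\lambda T}-1>\lambda T$ (the paper cites Taylor expansion for the same fact). Your extra care about $\mathcal{R}(x,T)$ possibly leaving $S$ is a reasonable refinement of a point the paper's proof passes over silently, but it does not change the argument.
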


\begin{proof}
    Observe that \eqref{eq:phi1_local} reduces to \eqref{eq:phi1global} for $l_1 = \sup_{x\in S} l_1(x)$ and $\Delta = \sup_{x\in S}\Delta(x)$, 
    so it holds that $\nu_1^l(T,x)\leq\nu_1^g(T), \forall x\in S, \forall T\in\reals_{>0}$ for the same $\alpha$. It follows that $\phi_1^g(T,x) \leq \phi_1^l(T,x)$. Therefore, satisfaction of \eqref{eq:zoh_cbf_condition} with $\phi_1^g$ implies satisfaction of \eqref{eq:zoh_cbf_condition} with $\phi_1^l$, and so by \ref{thm:nu1}, $\phi_1^g$ also solves \ref{problem:design}.
    
    From Taylor expansion, it holds that $T < \frac{1}{\lambda}(e^{\lambda T}-1), \forall \lambda > 0$, so it follows that $\nu_1^g(T) < \nu_0^g(T), \forall T\in\reals_{>0}$.
\end{proof}

\noindent Thus, both $\phi_1^l$ and $\phi_1^g$ reduce conservatism compared to $\phi_0^g$. 

The physical margins of $\phi_1^l$ and $\phi_1^g$ are then $\delta_1^l(T)=\alpha^{-1}(\sup_{x\in S, \phi_1^l(T,x)=0}l_1(x)T\Delta(x))$ and $\delta_1^g(T) = \alpha^{-1}(l_1T\Delta)$, respectively. 
Since $\alpha$ is assumed locally Lipschitz continuous, there exists $\Gamma \in \reals_{>0}$ and a neighborhood $Q\subseteq\reals_{\geq0}$ of the origin such that $\alpha(\lambda) \leq \Gamma \lambda, \forall \lambda \in Q$. It follows that $\alpha^{-1}(\lambda) \geq \frac{1}{\Gamma}\lambda, \forall \lambda \in Q$, so $\delta_1^l$ and $\delta_1^g$ vary linearly with $T$, as does $\delta_0^g$.

To reduce conservatism further, {\color{black}we define the following error term, inspired  by \cite{Singletary}}, representing the difference between \eqref{eq:cbf_condition} evaluated at two points $x, z\in \mathbb R^n$ for a given input $u$:
\begin{multline}
	\upsilon(x, z, u)  \triangleq L_fh(z) - L_f h(x) + (L_g h(z) - L_g h(x)) u \\
	 -\alpha(-h(z)) + \alpha(-h(x)) \,.
\end{multline}
\noindent Using this, we can state the following result.
\begin{theorem} \label{thm:nu2}
    Consider the set $S$ defined in \eqref{eq:safe_set} and let $\alpha\in\mathcal{K}$. Then the function $\phi_2^l:\reals_{>0}\times\reals^n$ as follows solves \ref{problem:design}:
    \begin{equation}
    	\phi_2^l(T,x) \triangleq \alpha(-h(x)) - \underbrace{\sup_{z\in \mathcal{R}(x,T), u\in U} \upsilon(x, z, u)}_{\nu_2^l(T,x)} \,.
    \end{equation}
\end{theorem}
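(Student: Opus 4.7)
The plan is to mimic the proof of \ref{thm:nu1}, but rather than expanding the discretization error into Lipschitz-bounded pieces, to keep it packaged as the function $\upsilon$ and bound it directly by its supremum over the reachable set. Observe that by the definition of $\upsilon$, for any $x,z\in\reals^n$ and $u\in U$ one has the identity
\begin{equation*}
L_f h(z) + L_g h(z) u = \bigl[L_f h(x) + L_g h(x) u - \alpha(-h(x))\bigr] + \upsilon(x,z,u) + \alpha(-h(z)).
\end{equation*}

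First, I would fix $k\in\mathbb{N}$ and $t\in[kT,(k+1)T)$, write $x=x(t)$ and $x_k=x(kT)$, and apply the identity above with $z=x$, $x\leftarrow x_k$, and $u=u_k$, giving
\begin{equation*}
L_f h(x)+L_g h(x)u_k = \bigl[L_f h(x_k)+L_g h(x_k)u_k-\alpha(-h(x_k))\bigr] + \upsilon(x_k,x,u_k) + \alpha(-h(x)).
\end{equation*}
Second, since $u_k$ satisfies \eqref{eq:zoh_cbf_condition} with $\phi=\phi_2^l$, the bracketed term is bounded above by $-\nu_2^l(T,x_k)$. Third, by \ref{lemma:Delta_bound} (or directly by the definition of $\mathcal{R}(x_k,T)$), the trajectory point $x(t)$ lies in $\mathcal{R}(x_k,T)$ for all $t\in[kT,(k+1)T)$, and $u_k\in U$, so
\begin{equation*}
\upsilon(x_k,x,u_k)\;\leq\;\sup_{z\in\mathcal{R}(x_k,T),\,u\in U}\upsilon(x_k,z,u)\;=\;\nu_2^l(T,x_k).
\end{equation*}
Combining the two bounds, the $\nu_2^l(T,x_k)$ terms cancel and we obtain $L_f h(x)+L_g h(x)u_k\leq\alpha(-h(x))$, i.e.\ $\dot h(x(t))\leq\alpha(-h(x(t)))$ on $[kT,(k+1)T)$.

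Finally, since $k$ was arbitrary, the differential inequality $\dot h(x(t))\leq\alpha(-h(x(t)))$ holds for almost every $t\geq 0$ along the unique closed-loop trajectory; invoking the comparison argument used at the end of the proof of \ref{thm:nu1} (and \cite{Blanchini}) yields forward invariance of $S$, so $\phi_2^l$ solves \ref{problem:design}. I do not expect a serious obstacle: the algebraic rearrangement is the only nontrivial step, and the key point is simply recognizing that $\upsilon$ encapsulates precisely the discretization error that \ref{thm:nu1} bounds crudely by Lipschitz constants, so no further estimation is needed. The only mild subtlety is ensuring the supremum $\nu_2^l(T,x_k)$ is well-defined and finite, which follows from compactness of $U$, continuity of the integrands in $\upsilon$, and the fact that $\mathcal{R}(x_k,T)$ is bounded under the piecewise-constant input.
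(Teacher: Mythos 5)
Your proof is correct and is exactly the argument the paper intends: the paper omits the proof of this theorem, stating only that it ``follows the same logic as the proof of \ref{thm:nu1},'' and your decomposition of $L_f h(x) + L_g h(x)u_k$ into the sampled-time condition, the error term $\upsilon(x_k,x,u_k)$ bounded by its supremum over $\mathcal{R}(x_k,T)\times U$, and $\alpha(-h(x))$ is precisely that logic with the Lipschitz estimate replaced by the packaged error bound. No differences worth noting.
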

\noindent The proof follows the same logic as the proof of \ref{thm:nu1}, and is omitted here in the interest of space. Using the same approach relating $\phi_1^g$ and $\phi_1^l$, we can define the function $\phi_2^g$ for which the following result can be easily shown.

\begin{corollary}
    Suppose that the conditions of \ref{thm:nu2} hold. Then the function $\phi_2^g:\reals_{>0}\times\reals^n$ as follows solves \ref{problem:design}:
    \begin{equation}
        \phi_2^g(T,x) \triangleq \alpha(-h(x)) - \underbrace{\sup_{\substack{y\in S , z\in\mathcal{R}(y,T),u\in U}} \upsilon(y,z,u)}_{\nu_2^g(T)} \,.
    \end{equation}
\end{corollary}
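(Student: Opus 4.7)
The plan is to mimic the relationship between \ref{thm:nu1} and \ref{thm:nu1g}, since the pair $(\phi_2^l, \phi_2^g)$ is structured identically to the pair $(\phi_1^l, \phi_1^g)$: the margin $\nu_2^g(T)$ is obtained from $\nu_2^l(T, x)$ by adjoining an outer supremum over the base point $y \in S$. This extra supremum makes $\nu_2^g$ a uniform upper bound for $\nu_2^l$, which in turn makes $\phi_2^g$ a pointwise lower bound for $\phi_2^l$, after which the result follows immediately from \ref{thm:nu2}.

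Concretely, I would first observe that for any fixed $x \in S$, restricting the outer supremum in the definition of $\nu_2^g(T)$ to the single point $y = x$ recovers exactly the definition of $\nu_2^l(T, x)$. Hence $\nu_2^l(T, x) \leq \nu_2^g(T)$ for all $x \in S$ and all $T \in \reals_{>0}$, and therefore $\phi_2^g(T, x) \leq \phi_2^l(T, x)$. Consequently, any ZOH control input of the form \eqref{eq:piecewise_control} satisfying the ZOH-CBF condition \eqref{eq:zoh_cbf_condition} with $\phi = \phi_2^g$ also satisfies \eqref{eq:zoh_cbf_condition} with $\phi = \phi_2^l$. Applying \ref{thm:nu2} then yields forward invariance of $S$ along the closed-loop trajectories of \eqref{eq:model}, so $\phi_2^g$ solves \ref{problem:design}.

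The only point requiring care, and the nearest thing to an obstacle, is ensuring that the outer supremum defining $\nu_2^g(T)$ is finite so that $\phi_2^g$ is a well-defined real-valued function. This is not substantive but should be noted: the standing assumptions give $U$ compact, $S$ compact (as in \ref{thm:nu2}'s hypotheses), and $f, g, \alpha$ continuous, so the reachable set $\mathcal{R}(y, T)$ remains inside a compact neighborhood of $S$ uniformly in $y \in S$, and continuity of $\upsilon$ on this compact domain gives a finite supremum. Beyond this bookkeeping, no additional analytical difficulty is anticipated, as all of the actual derivative-bounding work was already carried out in the proof of \ref{thm:nu2}.
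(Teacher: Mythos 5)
Your proof is correct and matches the paper's intended argument exactly: the paper gives no explicit proof, stating only that the result follows ``using the same approach relating $\phi_1^g$ and $\phi_1^l$,'' which is precisely the domination argument $\nu_2^l(T,x)\leq\nu_2^g(T)$, hence $\phi_2^g(T,x)\leq\phi_2^l(T,x)$, followed by an appeal to \ref{thm:nu2}. Your added remark on finiteness of the supremum is reasonable bookkeeping that the paper omits (note only that \ref{thm:nu2} does not itself state compactness of $S$), but it does not change the substance.
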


\begin{remark}
Using $l_1(x),\Delta(x)$ as defined in \ref{thm:nu1}, and for the same $\alpha\in\mathcal{K}$, one can show that $\upsilon(x,z,u) \leq l_1(x)T\Delta(x), \forall z\in\mathcal{R}(x,T), \forall u \in U, \forall x\in S$. Thus, for any $T\in \reals_{>0}$, the controller margins satisfy 
$\nu_2^l(T,x) \leq \nu_1^l(T,x), \forall x \in S$, 
and it follows that 
$\nu_2^g(T) \leq \nu_1^g(T)$.
\end{remark}

\subsubsection{Alternative Method Based On Second Order Dynamics} \label{sec:second_order}

The approaches discussed so far, as well as in \cite{Cortez,James,Singletary}, have relied on showing satisfaction of \eqref{eq:cbf_condition} to prove safety. In this section, rather that enforcing \eqref{eq:cbf_condition} between sample times, we start from a discrete-time CBF condition and apply it to an approximation of the continuous-time dynamics. One sufficient discrete-time CBF condition, as shown in \cite{Discrete_CBFs}, is
\begin{equation}
	h(x_{k+1}) - h(x_k) \leq -\gamma h(x_k), {\color{black}\; \forall k \in \mathbb{N}} \label{eq:discrete_cbf}
\end{equation}
for some $\gamma \in (0,1]$.
In general, this condition is not control-affine. However, its linear approximation is control-affine and thus amenable to inclusion in a QP. The error of a linear approximation of a twice differentiable function is bounded by the function's second derivative. For brevity, define 
    {\color{black}$\psi(x,u) \triangleq \nabla[\dot{h}(x)]\,(f(x) + g(x)u)$} 
which represents the second derivative of $h$ between time steps.
Since $f,g,\nabla[h]$ are assumed locally Lipschitz, $\psi$ is defined almost everywhere. {\color{black}Define the bound}

\noindent
\vspace{-8pt}\begin{equation}
	\eta(T,x) \triangleq \max\left\lbrace\left(\sup_{z\in\mathcal{R}(x,T)\setminus\mathcal{Z},u\in U} \psi(z,u)\right), 0\right\rbrace \label{eq:eta} \,,
\end{equation}
where $\mathcal{Z}$ is any set of Lebesgue measure zero {\color{black}(to account for CBFs that are not twice differentiable everywhere)}.
We are now ready to state our first solution to \ref{problem:design} that does not rely on satisfying \eqref{eq:cbf_condition} along $x(t), \forall t \geq 0$.

\begin{theorem} \label{thm:nu3}
    The function $\phi_3^l:\reals_{>0}\times\reals^n$, defined as
	\begin{equation}
		\phi_3^l(T,x) \triangleq -\frac{\gamma}{T}h(x) - \underbrace{\frac{1}{2}T\eta(T,x)}_{\nu_3^l(T,x)} \, \label{eq:phi3}
	\end{equation}
	solves \ref{problem:design}, for any $\gamma \in (0,1]$.
\end{theorem}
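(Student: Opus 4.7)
The plan is to reduce the continuous-time invariance requirement to a second-order Taylor bound on $h$ along the closed-loop trajectory, and to show that the ZOH-CBF inequality with $\phi_3^l$ forces this bound to remain non-positive on each sampling interval. The key observation is that $\nu_3^l = \tfrac{1}{2}T\eta(T,x)$ is exactly sized to absorb the Taylor remainder of $h$ over a window of length $T$.

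Fix an interval $[t_k, t_{k+1})$ on which the control $u_k$ is constant. Along the trajectory, $\dot h(x(t)) = L_f h(x(t)) + L_g h(x(t)) u_k$ is absolutely continuous, with $\ddot h(x(t)) = \psi(x(t), u_k)$ wherever it is defined. I would apply Taylor's theorem with integral remainder on $[0,T]$: for $\tau \in [0,T]$,
\begin{equation*}
    h(x(t_k+\tau)) = h(x_k) + \tau \dot h(x_k) + \int_0^\tau (\tau - s)\,\psi(x(t_k+s), u_k)\,ds.
\end{equation*}
This identity needs the integrand to be defined a.e. in $s$; since $\psi$ is defined off the measure-zero set $\mathcal{Z}$, the standard caveat is that the trajectory meets $\mathcal{Z}$ on a set of times of measure zero. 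This is the single technical point I would flag, and it is a routine side condition in the CBF literature when $h \in \mathcal{C}^1_{loc}$ rather than $\mathcal{C}^2$.

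Next, since $x(t_k+s) \in \mathcal{R}(x_k, T)$ and $u_k \in U$, the definition of $\eta$ gives $\psi(x(t_k+s), u_k) \leq \eta(T, x_k)$ for a.e. $s$, so the remainder integral is bounded by $\tfrac{\tau^2}{2}\eta(T,x_k)$. Substituting the hypothesis \eqref{eq:zoh_cbf_condition} with $\phi = \phi_3^l$, namely $\dot h(x_k) \leq -\tfrac{\gamma}{T} h(x_k) - \tfrac{T}{2}\eta(T,x_k)$, and collecting terms yields
\begin{equation*}
    h(x(t_k+\tau)) \;\leq\; \left(1 - \tfrac{\gamma \tau}{T}\right) h(x_k) \;+\; \tfrac{\eta(T,x_k)\,\tau}{2}(\tau - T).
\end{equation*}

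To finish I would induct on $k$ to show $h(x_k) \leq 0$ for all $k$, with base case $x_0 \in S$. For the inductive step, since $\gamma \in (0,1]$ and $\tau \in [0,T]$, the coefficient $1 - \gamma\tau/T \in [0,1]$, so the first term is non-positive; since $\eta(T,x_k) \geq 0$ by construction and $\tau - T \leq 0$, the second term is also non-positive. Hence $h(x(t)) \leq 0$ on the whole interval $[t_k, t_{k+1}]$, which both closes the induction ($h(x_{k+1}) \leq 0$) and establishes forward invariance of $S$ in continuous time. The main obstacles are (i) making the Taylor remainder argument go through at points where $h$ fails to be twice differentiable, and (ii) verifying the sign bookkeeping that makes both terms non-positive simultaneously; both are minor once the Taylor identity is in hand.
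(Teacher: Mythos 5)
Your proposal is correct and follows essentially the same route as the paper: the paper bounds $\dot h$ on the interval by $\dot h(x_k)+\tau\eta(T,x_k)$ and integrates again, which is exactly your Taylor-with-integral-remainder bound, and it concludes with the identical expression $\left(1-\tfrac{\gamma\tau}{T}\right)h(x_k)+\tfrac{\tau}{2}\eta(T,x_k)(\tau-T)$ and the same sign/induction argument. The measure-zero caveat you flag is precisely what the set $\mathcal{Z}$ in the definition of $\eta$ is there to handle.
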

\begin{proof}
	For $k\in \mathbb N$, let $t = kT + \tau, \tau \in [0, T]$. For any $x(t)\in\mathcal{R}(x_k,T)$, $u$ as in \eqref{eq:piecewise_control} with $u_k \in U$, {\color{black}the time derivative $\dot{h}(x(t))=\dot{h}(x(t_k)) +  \int_{kT}^{kT+\tau} \ddot{h}(x(\sigma)) d\sigma=\dot{h}(x(t_k)) +  \int_{kT}^{kT+\tau} \psi(x(\sigma), u_k) d\sigma$} satisfies
	\begin{align*}
	    \dot{h}(x(t)) 
	    %
	    %
	    \overset{\eqref{eq:eta}}{\leq} \dot{h}(x_k) + \tau \eta(T,x_k).
	\end{align*}
\newcommand{\compact}{}
    Similarly, {\color{black}$h(x(t))=h(x(t_k)) +  \int_{kT}^{kT+\tau} \dot{h}(x(\sigma)) d\sigma$} satisfies
    \begin{align*} 
        h(x(t)) 
        %
        %
        %
        \leq& \ h(x_k) + \dot{h}(x_k) \tau + \frac{1}{2}\tau^2 \eta(T,x_k) \\
        \overset{\eqref{eq:phi3}}{\leq}& \compact \  h(x_k) + \phi_3^l(T,x)\tau + \frac{1}{2}\tau^2 \eta(T,x_k)
        \\ =& \compact \ h(x_k) - \frac{\gamma\tau}{T}h(x_k) - \frac{\tau T}{2}\eta(T,x_k) + \frac{\tau^2}{2} \eta(T,x_k) \\
        =& \compact \left(1 - \frac{\gamma \tau}{T}\right) h(x_k) + \frac{\tau}{2}\eta(T,x_k) \left( \tau - T \right).
    \end{align*}
	By definition in \eqref{eq:eta}, $\eta(T,x_k) \geq 0$. Suppose $h(x_k) \leq 0$. Then both terms of the above equation are nonpositive for any $\tau\in [0,T]$, so $h(x(t)) \leq 0, \forall t\in[kT, kT+T]$, and thus, $h(x_{k+1}) \leq 0$. Hence, given $x(0)$ such that $h(x(0)) \leq 0$ and applying \eqref{eq:zoh_cbf_condition} at every time step with $\phi=\phi_3^l$, it follows by induction that $h(x(t))\leq 0, \forall t \geq 0$, and thus $S$ is forward invariant along the closed-loop trajectories of \eqref{eq:model}. Therefore, the function $\phi_3^l$ solves \ref{problem:design}.
\end{proof}

Similar to the previous cases, we can define the global version $\phi_3^g$ as follows and show that it also solves \ref{problem:design}.
\begin{corollary}
    Under the assumptions of \ref{thm:nu3}, the function $\phi_3^g:\reals_{>0}\times\reals^n$ as follows solves \ref{problem:design}:
    \begin{equation}
        \phi_3^g (T,x) \triangleq -\frac{\gamma}{T}h(x) - \underbrace{\frac{1}{2}T\sup_{z\in S}\eta(T,z)}_{\nu_3^g(T)} \,. \label{eq:phi3g}
    \end{equation}
\end{corollary}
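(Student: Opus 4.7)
The plan is to obtain the result as a direct reduction to \ref{thm:nu3}, exploiting the fact that $\phi_3^g$ is a uniform (state-independent) tightening of $\phi_3^l$. The key observation is that the only difference between the two functions lies in the controller-margin term: $\phi_3^l$ uses $\eta(T,x)$ evaluated at the current sampled state, whereas $\phi_3^g$ uses its supremum $\sup_{z\in S}\eta(T,z)$ over the whole safe set.

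First, I would record the pointwise comparison. For every $x\in S$, by definition of the supremum, $\eta(T,x)\leq \sup_{z\in S}\eta(T,z)$, so $\nu_3^l(T,x)=\tfrac{1}{2}T\eta(T,x)\leq \tfrac{1}{2}T\sup_{z\in S}\eta(T,z)=\nu_3^g(T)$. Subtracting from the common first term $-\tfrac{\gamma}{T}h(x)$, this yields
\begin{equation*}
\phi_3^g(T,x)\;\leq\;\phi_3^l(T,x),\qquad \forall x\in S,\ \forall T\in\reals_{>0}.
\end{equation*}

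Second, I would argue the reduction. Suppose $u_k\in U$ is any piecewise-constant control input of the form \eqref{eq:piecewise_control} such that the ZOH-CBF condition \eqref{eq:zoh_cbf_condition} holds with $\phi=\phi_3^g$ at every sampled state $x_k\in S$. By the pointwise inequality above, the same $u_k$ also satisfies \eqref{eq:zoh_cbf_condition} with $\phi=\phi_3^l$, and therefore \ref{thm:nu3} directly guarantees forward invariance of $S$ along the closed-loop trajectories of \eqref{eq:model}. Hence $\phi_3^g$ solves \ref{problem:design}.

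I do not anticipate a substantial obstacle; the argument is essentially a monotonicity/domination step, analogous to how \ref{thm:nu1g} was obtained from \ref{thm:nu1}. The only mild subtlety is ensuring that $\sup_{z\in S}\eta(T,z)$ is well-defined so that $\phi_3^g$ is a meaningful real-valued function; this is inherited from the bound already implicit in the construction of $\eta(T,\cdot)$ via the reachable set and compact input set $U$, and need not be re-derived in the proof.
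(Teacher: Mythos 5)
Your proposal is correct and is exactly the argument the paper intends: the paper omits the proof, noting only that it follows ``similar to the previous cases,'' i.e., by the same domination argument used to derive \ref{thm:nu1g} from \ref{thm:nu1}. Your pointwise bound $\nu_3^l(T,x)\leq\nu_3^g(T)$ for $x\in S$, followed by the reduction to \ref{thm:nu3}, is precisely that argument.
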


We now study how the solutions $\phi_3^l,\phi_3^g$ compare to prior methods, by first comparing the controller margins as follows.

\begin{theorem} \label{thm:comparison}
	Under the assumptions of \ref{thm:nu3}, the controller margins for $\phi_3^l,\phi_3^g$ and $\phi_1^l,\phi_1^g$ satisfy $\nu_3^l(T,x) \leq \frac{1}{2}\nu_1^l(T,x)$ and $\nu_3^g(T)\leq \frac{1}{2}\nu_1^g(T)$, $\forall x \in S, \forall T\in \reals_{>0}$.
\end{theorem}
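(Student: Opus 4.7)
The plan is to reduce the comparison to showing $\eta(T,x)\leq l_1(x)\Delta(x)$ for all $x\in S$, since both controller margins carry a common factor $T/2$ (after pulling the $1/2$ out on the $\nu_1$ side). Once this pointwise inequality is established, the global bound $\nu_3^g(T)\leq \tfrac{1}{2}\nu_1^g(T)$ follows immediately by taking the supremum of both sides over $S$ (and noting that $\eta$ is the positive part of the supremum of $\psi$, so the constant $0$ used in \eqref{eq:eta} never enlarges the bound).

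The core computation is to bound $\psi(z,u)=\nabla[\dot h(z)]\,(f(z)+g(z)u)$. First I would expand $\dot h(z)=L_fh(z)+L_gh(z)u_k$ so that $\nabla[\dot h(z)]=\nabla[L_fh(z)]+\nabla[L_gh(z)]\,u_k$ (this is where we use that $u$ is held constant over the sampling interval, exactly as in the setup of \ref{thm:nu3}). Next, since $L_fh$ and $L_gh$ are locally Lipschitz on $\mathcal{R}(x,T)$ with constants $l_{L_fh}(x)$ and $l_{L_gh}(x)$ respectively, their gradients (which exist almost everywhere by Rademacher's theorem, consistent with the ``Lebesgue measure zero'' set $\mathcal{Z}$ in \eqref{eq:eta}) are bounded in norm by these Lipschitz constants on $\mathcal{R}(x,T)$. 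Applying the triangle inequality and $\|u_k\|\leq u_{\max}$ then yields
\begin{equation*}
\|\nabla[\dot h(z)]\|\leq l_{L_fh}(x)+l_{L_gh}(x)\,u_{\max}.
\end{equation*}

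Combining this with Cauchy--Schwarz gives
\begin{equation*}
\psi(z,u)\leq \|\nabla[\dot h(z)]\|\,\|f(z)+g(z)u\|\leq \bigl(l_{L_fh}(x)+l_{L_gh}(x)u_{\max}\bigr)\,\|f(z)+g(z)u\|,
\end{equation*}
for every $z\in\mathcal{R}(x,T)\setminus\mathcal{Z}$ and $u\in U$. Taking the supremum over this set, using the definition of $\Delta(x)$, and observing that $l_{\alpha(h)}(x)\geq 0$ so $l_{L_fh}(x)+l_{L_gh}(x)u_{\max}\leq l_1(x)$, we obtain $\sup\psi(z,u)\leq l_1(x)\Delta(x)$. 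Since the right-hand side is nonnegative, the $\max$ with $0$ in \eqref{eq:eta} is inactive, and hence $\eta(T,x)\leq l_1(x)\Delta(x)$. Multiplying by $T/2$ gives $\nu_3^l(T,x)\leq \tfrac{1}{2}\nu_1^l(T,x)$. The global statement follows by taking the supremum over $x\in S$ on both sides.

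The only real obstacle I anticipate is the measure-theoretic care needed when passing from Lipschitz constants to pointwise gradient bounds, since $\psi$ is only defined almost everywhere. However, because the sup in \eqref{eq:eta} already excludes a measure-zero set $\mathcal{Z}$, and because both Lipschitz continuity (with constant $L$) and $\|\nabla\cdot\|\leq L$ a.e.\ are equivalent characterizations for locally Lipschitz functions, this technicality is absorbed cleanly into the same set $\mathcal{Z}$ already acknowledged in the definition of $\eta$. No other step requires new machinery beyond the triangle and Cauchy--Schwarz inequalities.
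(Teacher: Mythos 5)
Your proposal is correct and follows essentially the same route as the paper's own proof: both expand $\nabla[\dot h(z)]=\nabla[L_fh(z)]+\nabla[L_gh(z)]u$, bound the gradient norms by the local Lipschitz constants $l_{L_fh}(x), l_{L_gh}(x)$ over $\mathcal{R}(x,T)$, apply Cauchy--Schwarz against $\|\dot z\|\leq\Delta(x)$, and then use $l_{\alpha(h)}(x)\geq 0$ to absorb the result into $\tfrac{1}{2}\nu_1^l(T,x)$, with the global case following by taking suprema. Your explicit treatment of the $\max\{\cdot,0\}$ in \eqref{eq:eta} and of the almost-everywhere differentiability is a slightly more careful rendering of the same argument.
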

\begin{proof}
	Since $f$ and $g$ are differentiable almost everywhere, their Lipschitz constants are the norms of their gradients. Denote $\mathcal{X}(x) = \mathcal{R}(x,T)\setminus\mathcal{Z}\times U$. Thus, 
	
	\noindent\vspace*{-14pt}
	\begin{align*}
	    \nu_3^l(T&,x) = \frac{T}{2}\eta(T,x) = \frac{T}{2}\max \left\lbrace \sup_{(z,u)\in\mathcal{X}(x)} \psi(z,u), 0 \right\rbrace \\
		=& \frac{T}{2}\max \left\lbrace \sup_{(z,u)\in\mathcal{X}(x)} \nabla[L_f h(z) + L_g h(z) u] \dot{z}, 0 \right\rbrace \\
		\leq& \frac{T}{2} \sup_{(z,u)\in\mathcal{X}(x)} \left( \norm{\nabla[L_f h(z)]} + \norm{\nabla[L_g h(z)]}u_\textrm{max} \right) ||\dot{z}|| \\
		\leq& \frac{T}{2}\left( l_{L_f h}(x) + l_{L_g h}(x)u_\textrm{max} \right) \Delta(x) \\
		=& \frac{1}{2}\nu_1^l(T,x) - \frac{1}{2} l_{\alpha(h)}(x) T \Delta(x) \hspace{1.15in}
	\end{align*}
	The inequality for the global margins follows immediately.
\end{proof}

Thus, solutions $\phi_1^g, \phi_2^g$ are provably less conservative than the existing solution $\phi_0^g$, and $\phi_3^g$ is provably half as conservative as $\phi_1^g$ (and similarly for the local margins). It is difficult to analytically compare $\phi_2^l,\phi_2^g$ with $\phi_3^l,\phi_3^g$, so we address this via simulations in Section~\ref{sec:results}.

Lastly,
we consider the physical margins. Since $\alpha\in\mathcal{K}$ from \eqref{eq:phi_form} is specified as $\alpha(\lambda) = \frac{\gamma}{T}\lambda$ in \eqref{eq:phi3},\eqref{eq:phi3g}, the physical margin of $\phi_3^g$ is $\delta_3^g(T) = \frac{T}{\gamma}\nu_3^g(T) = \frac{T^2}{2\gamma} \sup_{x\in S\setminus\mathcal{Z}, u\in U}\psi(x,u)$, and similarly $\delta_3^l(T) = \frac{T^2}{2\gamma} \sup_{x\in S\setminus\mathcal{Z},\phi_3^l(T,x)=0, u\in U}\psi(x,u)$. This implies $\delta_3^l,\delta_3^g$ vary quadratically with $T$, while $\delta_0^g,\delta_1^l,\delta_1^g$ vary only linearly with $T$. 
Note that choosing $\alpha(\lambda)=\frac{\gamma}{T}\lambda$ does not similarly reduce $\delta_0^g,\delta_1^l,\delta_1^g,\delta_2^l,\delta_2^g$, because $l_{\alpha(h)}$ 
would increase inversely with $T$.
Thus, reducing step size is far more effective at reducing physical margin when $\phi_3^l$ or $\phi_3^g$ is used.

\section{Simulation Results} \label{sec:results}

We implemented the methods in Section~\ref{sec:methodologies} on two systems. First, we tested the unicycle system, described by
\begin{equation*}
\dot x_1 = u_1 \cos(x_3), \; \dot x_2 = u_1 \sin (x_3), \; \dot x_3 = u_2,
\end{equation*}
where $[x_1,\,x_2]\transpose$ is the position, $x_3$ is the orientation, {\color{black}and $u_1$,$u_2$ are the linear and} angular velocity of the agent; {\color{black}its task was to move around an obstacle} at the origin using the CBF \cite{polar_cbf}
\begin{equation*}
	h = \rho - \sqrt{x_1^2 + x_2^2 - ({\textstyle\wrap_\pi}(x_3 - \sigma \arctan2(x_2, x_1)))^2} \,,
\end{equation*}
where $\rho$ is the radius to be avoided, and $\sigma$ is a shape parameter.
Second, we tested a spacecraft pointing system, described by
\begin{equation*}
\dot p = \omega\times p, \; \dot \omega = u,
\end{equation*}
where $p\in\reals^3, ||p||\equiv 1$, is a pointing vector, $\omega\in\reals^3$ is the angular velocity, and $u\in \reals^3$ is the angular acceleration.
The system was tasked with reorienting an instrument while pointing away from an inertially-fixed {\color{black}vector} using the CBF
\begin{equation*}
	h = s \cdot p - \cos(\theta) + \mu (s \cdot (\omega \times p)) | s \cdot (\omega \times p) | \,,
\end{equation*}
where $s\in\reals^3$, $||s|| = 1$, is a constant vector pointing to {\color{black}an} object to be avoided, $\theta$ is the smallest allowable angle, and $\mu$ is a shape parameter. We also constrained $||\omega||_\infty \leq 0.2$, because otherwise the global controller margins are unbounded. 

\begin{table}
	\centering
	\vspace{2pt}
	\begin{tabular}{r | c | c}
		Parameter & Unicycle & Spacecraft \\ \hline
		Exclusion Zone & $\rho = 10$ & $\theta = \pi/5$ \\
		Shape Parameter & $\sigma = 1$ & $\mu = 100$ \\
		$U$ & $\begin{aligned} u_1 &\in [0,5]\\ u_2 &\in [-0.25, 0.25] \end{aligned}$ & $||u||_\infty \leq 0.01$ \\
		\hdashline
		$\nu_0^{g}(0.1)$ & $1.316(10)^{50}$ & 14.20 \\
		$\nu_1^{g}(0.1)$ & 570.3 & 2.946 \\
		$\nu_2^{g}(0.1)$ & 0.6908 & 0.8815 \\
		$\nu_3^{g}(0.1)$ & 0.1319 & 0.1194 
	\end{tabular}
	\caption{\small Simulation parameters and global controller margins}
	\label{tab:params}
\end{table}

\begin{table}[]
    \centering
    \resizebox{1\columnwidth}{!}{
\begin{tabular}{c| c| c | c |c |c |c }  & \multicolumn{3}{c|}{\normalsize Unicycle}& \multicolumn{3}{c}{\normalsize Spacecraft}\\ \hline
$T$ & 0.1 & 0.01 & 0.001 & 0.1 & 0.01 & 0.001 \\ 
\hline 
$\delta_0^{g,\inf}$ & $1.2(10)^{42}$ & 420 & 0.010 & 9.8 &  0.23  & 0.021\\ 
$\delta_1^{g,\inf}$  & 0.54 & 0.054 & 0.0054 & 2.0 &  0.20 & 0.020\\ 
$\delta_2^{g,\inf}$ & 0.53 & 0.053 & 0.0053&  0.81 &  0.082 & 0.0082 \\ 
$\delta_3^{g,\inf}$ & 0.013 &  $1.3(10)^{-4}$  &  $1.3(10)^{-6}$ & 0.013 &  $1.3(10)^{-4}$  &  $1.3(10)^{-6}$
\end{tabular}}
    \caption{\small Global physical margins for selected time-steps $T$}
    \label{tab:physical}
\end{table}

Both systems were tested for $T = 0.1$. For functions $\phi_0^g,\phi_1^l,\phi_1^g,\phi_2^l,\phi_2^g$, we used $\alpha(\lambda) = \lambda$, and for $\phi_3^l,\phi_3^g$, we used $\gamma = 1$. Notable parameters and the controller margins for the selected time-step for both systems are listed in Table~\ref{tab:params}. The physical margins for various time-steps are listed in Table~\ref{tab:physical}. Note that $\delta_3^{g,\inf}$ is less than $\delta_0^{g,\inf},\delta_1^{g,\inf},\delta_2^{g,\inf}$, which means that $\phi_3^l$ and $\phi_3^g$ will allow the system trajectories to get closer to the boundary of the safe set than any of the other methods. 
Moreover, for the smaller values of $T$ in Table~\ref{tab:physical}, $\delta_3^{g,\inf}$ varies quadratically with $T$, while $\delta_0^{g,\inf}, \delta_1^{g,\inf}, \delta_2^{g,\inf}$ vary linearly with $T$.
The agents used a controller of the form
\begin{equation}
	u = \argmin_{u\in K_\textrm{zoh}} ||u - u_\textrm{nom}|| \label{eq:the_qp}
\end{equation}
where $u_\textrm{nom}$ is a nominal control law that ignores the obstacle, and $K_\textrm{zoh} \subseteq U$ is the set of control inputs satisfying \eqref{eq:zoh_cbf_condition}.

For the unicycle agent, the exact reachable sets $\mathcal{R}(x_k,T)$ were computed, and $\nu_1^l,\nu_2^l,\nu_3^l$ were computed using online maximizations of $l_1(x), \Delta(x), \upsilon(x, z, u), \psi(x,u)$ over these sets. 
For the spacecraft system {\color{black}(and in general for nonlinear systems)}, these reachable sets are harder to compute online, so we note that all preceding results still hold when $\mathcal{R}(x_k,T)$ is replaced with any superset of $\mathcal{R}(x_k,T)$ (though this {\color{black}in principle} increases conservatism). Also, by \ref{lemma:Delta_bound}, $\mathcal{R}(x_k,T) \subseteq B_{T\Delta}(x_k)$. To this end, given Lipschitz constants $l_f, l_g$ for functions $f,g$, respectively, an upper bound for $\Delta$ is
\begin{equation}
	\Delta_0(x_k) \triangleq \frac{||f(x_k)|| + ||g(x_k)||u_\textrm{max}}{1 - (l_f + l_g u_\textrm{max})T} \,, \label{eq:delta}
\end{equation}
assuming that 
the denominator of \eqref{eq:delta} is positive. Thus, the margins $\nu_1^l$,$\nu_2^l$,$\nu_3^l$ for the spacecraft were computed using online maximizations over the superset $B_{T\Delta_0(x_k)}(x_k)$.
{\color{black} These maximizations took approximately 0.028, 0.026, and 0.018 seconds for $\nu_1^l,\nu_2^l,\nu_3^l$, respectively, for the unicycle, and 0.058, 0.071, and 0.045 seconds, respectively, for the spacecraft on a 3.5 GHz computer using MATLAB R2019b. For higher-dimensional systems, these online computations could limit the applications of the local methods. Each global margin took under a minute to compute. }
We then computed the states using the exact dynamics, 
and solved \eqref{eq:the_qp} using OSQP \cite{osqp}. 
In total, 7 {\color{black}solutions} ($\phi_0^g$,$\phi_1^g$,$\phi_1^l$,$\phi_2^g$,$\phi_2^l$,$\phi_3^g$,$\phi_3^l$) {\color{black}to} \ref{problem:design}
were tested\footnote{Simulation code may be found at \href{https://github.com/jbreeden-um/phd-code/tree/main/2021/L-CSS CBFs for Sampled Data Systems}{https://github.com/jbreeden-um/phd-code/tree/main/2021/L-CSS CBFs for Sampled Data Systems}}.

\begin{figure}
	\centering
	\begin{tikzpicture}
        \node[anchor=south west,inner sep=0] (image) at (0,0) {\includegraphics[width=0.9\columnwidth,trim={0.25in, 0in, 0.5in, 0.3in},clip]{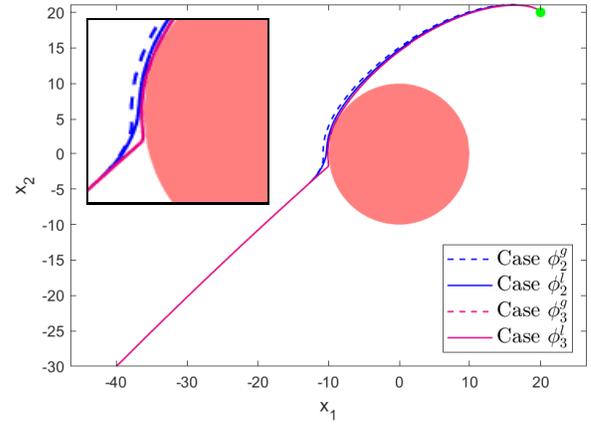}}; %
        \node[anchor=south west,inner sep=0] (image) at (1.1,2.92) 
        {\includegraphics[width=0.94in,trim={3in, 2.1in, 2.5in, 1.3in},clip,frame]{Uni_Trajectories.eps}
        };
    \end{tikzpicture}
	\caption{\small{The trajectories of the unicycle for 4 of the margin functions}}
	\label{fig:unicycle_trajectories}
\end{figure}
\begin{figure}
	\centering
	\includegraphics[width=0.32\textwidth,trim={1in, 0.2in, 1in, 0.2in},clip]
	{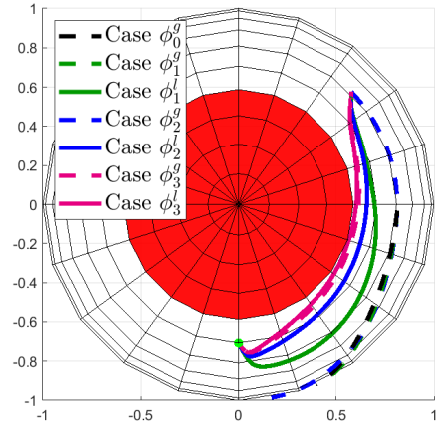}
	\caption{\small{The trajectories of the spacecraft for all 7 margin functions}}
	\label{fig:spacecraft_trajectories}
\end{figure}

The trajectories for the two systems are plotted in Figs.~\ref{fig:unicycle_trajectories}-\ref{fig:spacecraft_trajectories}, {\color{black}where} the green markers are the target locations. As expected, certain methods took wider arcs around the obstacles than others based on the relative values of $\nu$ and $\delta$.
For the unicycle, only four methods are shown because using $\phi_0^g$,$\phi_1^g$,$\phi_1^l$ resulted in the agent turning away from the target. Similarly for the spacecraft, using $\phi_0^g$,$\phi_1^g$,$\phi_2^g$ eventually resulted in divergence from the target attitude {\color{black} as the QP was unable to satisfy \eqref{eq:zoh_cbf_condition}}.

\begin{figure}
	\centering
	\includegraphics[width=0.968\columnwidth]{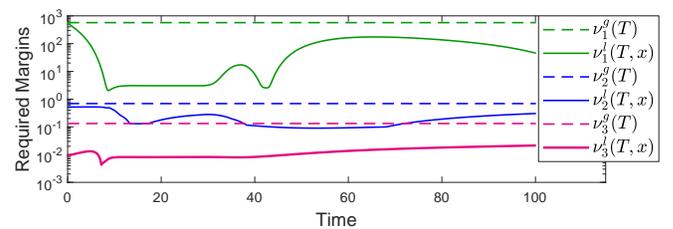}
	\caption{
\small Controller margins for the unicycle system
	}
	\label{fig:unicycle_margins}
\end{figure}

\begin{figure}
	\centering
	\includegraphics[width=0.968\columnwidth]{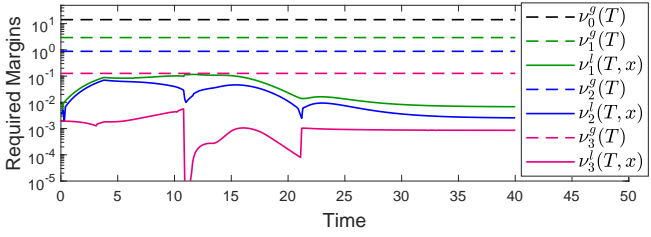}
	\caption{
	\small Controller margins for the spacecraft system}
	\label{fig:spacecraft_margins}
\end{figure}

The instantaneously required controller margins $\nu$ for every method, computed for $x(t)$ along the $\phi_3^l$ trajectories from Figs.~\ref{fig:unicycle_trajectories}-\ref{fig:spacecraft_trajectories}, are plotted in Figs.~\ref{fig:unicycle_margins}-\ref{fig:spacecraft_margins}. As predicted by \ref{thm:comparison}, the green solid and dashed lines for controller margins $\nu_1^l,\nu_1^g$ are always at least double 
(and generally an order of magnitude greater than)
the equivalent pink lines for $\nu_3^l,\nu_3^g$, respectively. The controller margins $\nu_2^l,\nu_2^g$ were also always larger than $\nu_3^l,\nu_3^g$, though this is not guaranteed by \ref{thm:comparison}. Interestingly, for the unicycle, the global margin $\nu_3^g$ was generally similar to or smaller than the local margin $\nu_2^l$, whereas for the spacecraft, $\nu_3^g$ was larger than both $\nu_1^l$ and $\nu_2^l$. However, the trajectories corresponding to $\phi_3^g$ still approached closer to the obstacles than those under $\phi^l_1$ and $\phi_2^l$ in both Figs.~\ref{fig:unicycle_trajectories}-\ref{fig:spacecraft_trajectories} because $\phi_3^g$ has an order of magnitude smaller physical margin.

\begin{figure}
	\centering
	\includegraphics[width=0.968\columnwidth]{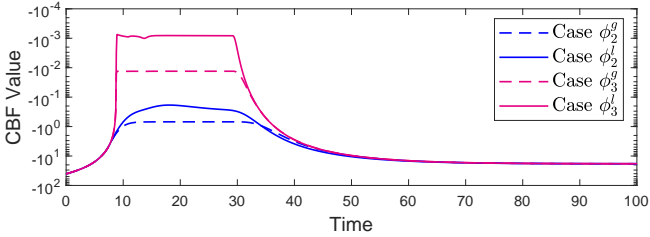}
	\caption{\small{CBF values along the 4 unicycle trajectories in Fig.~\ref{fig:unicycle_trajectories}}}
	\label{fig:unicycle_barriers}
\end{figure}
\begin{figure}[t]
	\centering
	\includegraphics[width=0.968\columnwidth]{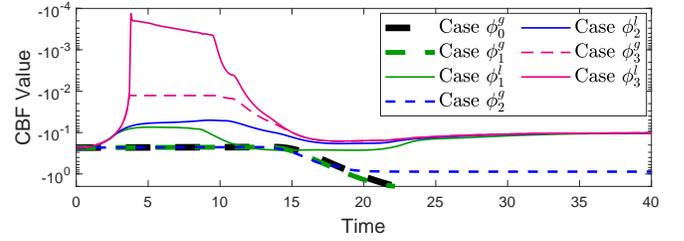}
	\caption{\small{CBF values along the 7 spacecraft trajectories in Fig.~\ref{fig:spacecraft_trajectories}}}
	\label{fig:space_barriers}
\end{figure}

Finally, the CBF values during every simulation are shown in Figs.~\ref{fig:unicycle_barriers}-\ref{fig:space_barriers}. From this, we see that the trajectories corresponding to $\phi_3^l$ and $\phi_3^g$ come within an order of magnitude closer to the boundary than those for any of the other methods. The dashed lines in Figs.~\ref{fig:unicycle_barriers}-\ref{fig:space_barriers} also agree with the theoretical physical margins listed in Table~\ref{tab:physical}.

Noting these physical margins, we added a second constraint to the unicycle system that forced the unicycle to navigate through a narrow corridor only 0.3 units wide, shown in Fig.~\ref{fig:two_obstacles}. The unicycle operating under $\phi_3^g$ or $\phi_3^l$ made it through the obstacles, while the best of the other methods ($\phi_2^l$) could not.

\begin{figure}[h!]
    \centering
    \begin{tikzpicture}
        \node[anchor=south west,inner sep=0] (image) at (0,0) {\includegraphics[width=0.9\columnwidth,trim={0.25in, 0in, 0.5in, 0.34in},clip]{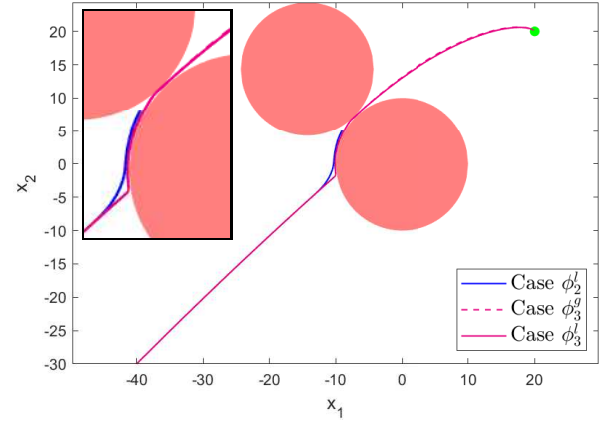}};
        \node[anchor=south west,inner sep=0] (image) at (0.95,2.4) {\includegraphics[width=0.77in,trim={3in, 2in, 2.5in, 1in},clip,frame]
        {Uni_TwoObstacles.eps}
        };
    \end{tikzpicture}
    \caption{\small{A simulation with two tightly-spaced obstacles, in which controllers using margins $\phi_3^l$ and $\phi_3^g$ permit passage through the obstacles, while the other functions force the agent to stop.}} 
    \label{fig:two_obstacles}
\end{figure}

\section{Conclusions}

We presented new conditions for ensuring safety in sampled-data systems that provably reduce conservatism compared to earlier results. We introduced two metrics for quantifying the margin in both the control input and in the effective shrinkage of the safe set. We showed that the proposed conditions have smaller margins compared to those in earlier studies, and demonstrated the improved performance of the proposed results via numerical case studies. In particular, the physical margin of the last condition proposed varied quadratically with the discretization time-step, while that of the existing approaches varied linearly. This allowed completion of objectives that were not possible using other methods under the same time-step. Future work includes studying whether higher-order approximations can further decrease conservatism.

\bibliographystyle{ieeetran}
\bibliography{sources}
\end{document}